\tiny\color{gray},
\newtheorem{theorem}{Theorem}[section]
\newtheorem{corollary}[theorem]{Corollary}
\newtheorem{remark}{Remark}
\begin{document}
\title[ {\ New integral formulas and identities involving special numbers}...%
]{{\Large New integral formulas and identities involving special numbers and
functions derived from certain class of special combinatorial sums}}
\author{YILMAZ SIMSEK}
\address{Department of Mathematics, Faculty of Science University of Akdeniz
TR-07058 Antalya-TURKEY}
\email{ysimsek@akdeniz.edu.tr}

\begin{abstract}
\vspace{2mm}By applying $p$-adic integral on the set of $p$-adic integers in 
\cite{SimsekMTJPAM2020} (Interpolation Functions for New Classes Special
Numbers and Polynomials via Applications of p-adic Integrals and Derivative
Operator, Montes Taurus J. Pure Appl. Math. 3 (1), ...--..., 2021 Article
ID: MTJPAM-D-20-00000), we constructed generating function for the special
numbers and polynomials involving the following combinatorial sum and
numbers:%
\begin{equation*}
y(n,\lambda )=\sum_{j=0}^{n}\frac{(-1)^{n}}{(j+1)\lambda ^{j+1}\left(
\lambda -1\right) ^{n+1-j}}
\end{equation*}

The aim of this paper is to use the numbers $y(n,\lambda )$ to derive some
new and novel identities and formulas associated with the Bernstein basis
functions, the Fibonacci numbers, the Harmonic numbers, the alternating
Harmonic numbers, binomial coefficients and new integral formulas for the
Riemann integral. We also investigate and study on open problems involving
the numbers $y(n,\lambda )$ in \cite{SimsekMTJPAM2020}. Moreover, we give
relation among the numbers $y\left(n,\frac{1}{2}\right)$, the Digamma
function, and the Euler constant. Finally, we give conclusions for the
results of this paper with some comments and observations.

\noindent \textsc{2010 Mathematics Subject Classification.} 005A15, 11B68,
11B73, 11B83, 26C05, 11S40, 11S80, 33B15.

\vspace{2mm}

\noindent \textsc{Keywords and phrases.}\ Generating function, Special
numbers and polynomials, Bernoulli-type numbers and polynomials, Fibonacci
numbers, Harmonic numbers, Stirling numbers, Daehee numbers, Digamma
function.
\end{abstract}

\maketitle


\section{Introduction}

Due to the works in \cite{3Euler} and \cite{2Euler}, we see that Daniel
Bernoulli and Goldbach were studied not only on interpolating a sequence,
but also on the partial sums of the harmonic series. But these studies were
not without much success. That is, Bernoulli and Goldbach were working the
following partial sums of the harmonic series%
\begin{equation*}
f(v)=\sum_{j=1}^{v}\frac{1}{j}
\end{equation*}%
where $v$ is a positive integer. In one of his studies, Euler\ found many
formulas on these topics involving the Digamma function, and the Euler
constant, interpolating a sequence, partial sums of the harmonic series. We
can find these clues in Euler's letter to Christian Goldbach dated October
13, 1729. In this letter, Euler mentioned the formulas he found and sent the
following formula, one of them, to Goldbach without any proof: 
\begin{equation*}
f\left(\frac{1}{2}\right) =2-2\log (2)
\end{equation*}
(\textit{cf}. \cite[p.136]{2Euler}, \cite[pp. 283--284]{3Euler}). In \cite[%
p.136]{2Euler}, \cite[pp. 283--284]{3Euler}, the following formula was
given: 
\begin{equation*}
f\left( \frac{1}{2}\right) =\int\limits_{0}^{1}\frac{1-\sqrt{x}}{1-x}dx.
\end{equation*}%
The motivation of this paper is of come from the spirit of historical
mathematical works of Bernoulli, Euler and Goldbach with the help of
applications of the following combinatorial numbers which was defined in 
\cite{SimsekMTJPAM2020}: 
\begin{equation}
y(n,\lambda )=\sum_{j=0}^{n}\frac{(-1)^{n}}{(j+1)\lambda ^{j+1}\left(
\lambda -1\right) ^{n+1-j}}  \label{ynldef}
\end{equation}

It would be very appropriate to give a brief information about how the
numbers $y(n,\lambda )$ can be found. By applying $p$-adic integral on the
set of $p$-adic integers in \cite{SimsekMTJPAM2020}, we constructed
generating function for the special numbers and polynomials involving
combinatorial sums and many special numbers and polynomials. When we
investigated and studied on interpolation functions for these new classes
special numbers and polynomials with series algebraic operations, we
performed on one of the these series multiplications with its inspiration,
the numbers $y(n,\lambda )$ given in equation (\ref{ynldef}) were defined.
In \cite{SimsekMTJPAM2020}, some of their properties for these numbers were
discussed and given.

In \cite{SimsekMTJPAM2020}, we have put forward the following open problems
for the numbers $y(n,\lambda )$\textbf{:}

\begin{enumerate}
\item \textit{One of the first questions that comes to mind what is
generating function for the numbers }$y(n,-1)$ and the numbers $y(n,\lambda
) $\textit{.}

\item \textit{Some of the other questions are what are the special families
of numbers the numbers }$y(n,-1)$\textit{\ are related to.}

\item \textit{What are the combinatorial applications of the numbers }$%
y(n,-1)$\textit{.}

\item \textit{Can we find a special arithmetic function representing this
family of numbers?}
\end{enumerate}

Partial solutions to some of the above open problems can be explored and
studied. By using (\ref{ynldef}), we can prove the following theorems
including formulas and identities related to the numbers $y(n,\lambda )$,
the harmonic numbers, the Fibonacci numbers, the Bernstein basis functions,
and the Riemann integral:

\begin{theorem}
\label{Th.berD} Let $n\in \mathbb{N}_{0}$. Then we have%
\begin{equation}
y(n,\lambda )=\frac{-1}{\lambda }\sum_{j=0}^{n}\sum_{k=0}^{j}\binom{n+1}{j}%
\frac{B_{k}S_{1}(j,k)}{j!B_{j}^{n+1}\left( \lambda \right) }.  \label{91c1}
\end{equation}%
where $B_{j}^{n}\left( \lambda \right) $, $B_{j}$, and $S_{1}(j,k)$ denote
the Bernstein basis function and the Bernoulli numbers, and the Stirling
numbers of the first kind, respectively.
\end{theorem}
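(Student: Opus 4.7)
The plan is to simplify the right-hand side of \eqref{91c1} into the closed form \eqref{ynldef} by first unpacking the Bernstein basis factor and then collapsing the inner Bernoulli--Stirling sum by a generating function argument. Recall the standard definition $B_j^{n+1}(\lambda)=\binom{n+1}{j}\lambda^j(1-\lambda)^{n+1-j}$, so that
\begin{equation*}
\frac{\binom{n+1}{j}}{B_j^{n+1}(\lambda)}=\frac{1}{\lambda^{j}(1-\lambda)^{n+1-j}} = \frac{(-1)^{n+1-j}}{\lambda^{j}(\lambda-1)^{n+1-j}}.
\end{equation*}
After pulling this out of the $k$-sum, the right-hand side of \eqref{91c1} is
\begin{equation*}
\frac{-1}{\lambda}\sum_{j=0}^{n}\frac{(-1)^{n+1-j}}{\lambda^{j}(\lambda-1)^{n+1-j}}\left(\sum_{k=0}^{j}\frac{B_{k}S_{1}(j,k)}{j!}\right).
\end{equation*}

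The core step is to evaluate the inner sum $T_j:=\sum_{k=0}^{j}B_{k}S_{1}(j,k)/j!$ in closed form. For this I would combine the two classical exponential generating functions
\begin{equation*}
\sum_{k\geq 0}B_{k}\frac{x^{k}}{k!}=\frac{x}{e^{x}-1},\qquad \sum_{j\geq k}S_{1}(j,k)\frac{t^{j}}{j!}=\frac{(\log(1+t))^{k}}{k!},
\end{equation*}
to obtain
\begin{equation*}
\sum_{j\geq 0}T_j\, t^{j}=\sum_{k\geq 0}B_{k}\frac{(\log(1+t))^{k}}{k!}=\frac{\log(1+t)}{e^{\log(1+t)}-1}=\frac{\log(1+t)}{t}=\sum_{j\geq 0}\frac{(-1)^{j}}{j+1}t^{j}.
\end{equation*}
Extracting coefficients yields the clean identity $T_j=\frac{(-1)^{j}}{j+1}$.

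Substituting $T_j$ back and combining the sign factors $(-1)^{j}(-1)^{n+1-j}=(-1)^{n+1}$ converts the right-hand side into
\begin{equation*}
\frac{-1}{\lambda}\sum_{j=0}^{n}\frac{(-1)^{n+1}}{(j+1)\lambda^{j}(\lambda-1)^{n+1-j}}=\sum_{j=0}^{n}\frac{(-1)^{n}}{(j+1)\lambda^{j+1}(\lambda-1)^{n+1-j}},
\end{equation*}
which is exactly $y(n,\lambda)$ by \eqref{ynldef}. The only delicate point is the Bernoulli--Stirling collapse $T_j=(-1)^j/(j+1)$; once that is in hand the remaining manipulations are routine bookkeeping of powers of $\lambda$, $\lambda-1$ and signs. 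I would therefore present the proof in three short stages: (i) rewrite $\binom{n+1}{j}/B_j^{n+1}(\lambda)$, (ii) prove the identity $T_j=(-1)^j/(j+1)$ via the generating function computation above, and (iii) assemble the pieces and reconcile signs to recover \eqref{ynldef}.
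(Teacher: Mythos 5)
Your proposal is correct and follows essentially the same route as the paper: rewrite $\binom{n+1}{j}/B_j^{n+1}(\lambda)$ as $(-1)^{n+1-j}/\bigl(\lambda^{j}(\lambda-1)^{n+1-j}\bigr)$ and collapse the inner sum via $\sum_{k=0}^{j}B_kS_1(j,k)=\frac{(-1)^{j}j!}{j+1}$, which is exactly the paper's identity (\ref{a91}) (derived there by the same generating-function manipulation you use, and tied to the Daehee numbers via (\ref{D})). The only difference is cosmetic: you verify the formula from right to left, whereas the paper starts from (\ref{ynldef}), passes to its equation (\ref{a9}), and then invokes (\ref{a91}).
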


\begin{theorem}
\label{Th. 1a}Let $n\in \mathbb{N}_{0}$. Then we have%
\begin{equation*}
y(n,\lambda )=\frac{(-1)^{n}}{\left( \lambda -1\right) ^{n+2}}%
\int\limits_{0}^{\frac{\lambda -1}{\lambda }}\frac{1-x^{n+1}}{1-x}dx.
\end{equation*}
\end{theorem}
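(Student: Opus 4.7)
The plan is to massage the explicit defining sum (\ref{ynldef}) into the claimed integral by extracting a constant prefactor and then recognizing a finite geometric sum under an integral sign.

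First I would factor $(-1)^n/(\lambda-1)^{n+2}$ out of the sum. Writing
\[
y(n,\lambda) = \sum_{j=0}^{n} \frac{(-1)^{n}}{(j+1)\lambda^{j+1}(\lambda-1)^{n+1-j}}
= \frac{(-1)^n}{(\lambda-1)^{n+2}} \sum_{j=0}^{n} \frac{1}{j+1}\left(\frac{\lambda-1}{\lambda}\right)^{j+1},
\]
the proof reduces to showing that the remaining finite sum equals $\int_{0}^{(\lambda-1)/\lambda} \frac{1-x^{n+1}}{1-x}\,dx$.

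Next I would invoke the finite geometric series identity
\[
\frac{1-x^{n+1}}{1-x} = \sum_{j=0}^{n} x^{j},
\]
valid as a polynomial identity (so the apparent singularity of the integrand at $x=1$ is removable and the Riemann integral is unambiguous, regardless of whether the upper limit $(\lambda-1)/\lambda$ is less than or greater than~$1$). Integrating term by term from $0$ to $t := (\lambda-1)/\lambda$ then gives
\[
\int_{0}^{t} \frac{1-x^{n+1}}{1-x}\,dx = \sum_{j=0}^{n} \frac{t^{j+1}}{j+1} = \sum_{j=0}^{n} \frac{1}{j+1}\left(\frac{\lambda-1}{\lambda}\right)^{j+1},
\]
which is precisely the sum left over after the first step. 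Combining these two displays yields the formula of Theorem~\ref{Th. 1a}.

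There is no real obstacle here; the argument is essentially a change of packaging of the definition. The only point that deserves a word of caution is the status of the upper limit: one must note that the integrand is actually the polynomial $1+x+\cdots+x^{n}$, so the formula is meaningful for every $\lambda \neq 0,1$ without any further convergence hypothesis.
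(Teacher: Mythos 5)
Your proof is correct and follows essentially the same route as the paper: factor out $\frac{(-1)^{n}}{(\lambda-1)^{n+2}}$, recognize each remaining term $\frac{1}{j+1}\left(\frac{\lambda-1}{\lambda}\right)^{j+1}$ as $\int_{0}^{(\lambda-1)/\lambda}x^{j}\,dx$, and sum the finite geometric series under the integral sign. Your remark that the integrand is really the polynomial $1+x+\cdots+x^{n}$ (so the apparent singularity at $x=1$ is harmless and no restriction beyond $\lambda\neq 0,1$ is needed) is a useful point that the paper leaves implicit.
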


\begin{theorem}
\label{Th Hn}Let $n\in \mathbb{N}_{0}$. Then we have%
\begin{equation}
y\left( n,\frac{1}{2}\right) =2^{n+2}\left( H_{\left[ \frac{n}{2}\right]
}-H_{n}+\frac{(-1)^{n+1}}{n+1}\right) .  \label{91c}
\end{equation}%
where $H_{n}$ denotes the harmonic numbers and $H_{0}=0$.
\end{theorem}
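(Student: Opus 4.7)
The plan is to deduce Theorem \ref{Th Hn} as a direct specialization of Theorem \ref{Th. 1a} at $\lambda=\tfrac12$, followed by a standard identification of the resulting integral with an alternating harmonic sum.

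First I would plug $\lambda=\tfrac12$ into Theorem \ref{Th. 1a}. The upper limit becomes $(\lambda-1)/\lambda=-1$, and $(\lambda-1)^{n+2}=(-1)^{n+2}/2^{n+2}$, so the sign factors combine to give
\begin{equation*}
y\!\left(n,\tfrac12\right)=2^{n+2}\int_{0}^{-1}\frac{1-x^{n+1}}{1-x}\,dx.
\end{equation*}
The integrand is the geometric sum $1+x+\cdots+x^{n}$, so term-by-term integration over $[0,-1]$ yields
\begin{equation*}
y\!\left(n,\tfrac12\right)=2^{n+2}\sum_{k=0}^{n}\frac{(-1)^{k+1}}{k+1}=2^{n+2}\sum_{j=1}^{n+1}\frac{(-1)^{j}}{j}.
\end{equation*}

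Next I would invoke the classical identity for the alternating harmonic sum,
\begin{equation*}
\sum_{j=1}^{m}\frac{(-1)^{j+1}}{j}=H_{m}-H_{\left[m/2\right]},
\end{equation*}
obtained by separating even and odd indices and using $\sum_{k=1}^{\lfloor m/2\rfloor}\tfrac{1}{2k}=\tfrac12 H_{\lfloor m/2\rfloor}$. Applying this with $m=n+1$ gives
\begin{equation*}
y\!\left(n,\tfrac12\right)=2^{n+2}\bigl(H_{\left[(n+1)/2\right]}-H_{n+1}\bigr).
\end{equation*}

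The last step, and the only mildly delicate one, is to rewrite this in the form claimed in \eqref{91c}, namely in terms of $H_{[n/2]}$ and $H_{n}$ rather than $H_{[(n+1)/2]}$ and $H_{n+1}$. I would split according to the parity of $n$: writing $H_{n+1}=H_{n}+\tfrac{1}{n+1}$ and noting that $[(n+1)/2]=[n/2]$ when $n$ is even while $[(n+1)/2]=[n/2]+1$ when $n$ is odd, a short case check shows that in both parities
\begin{equation*}
H_{\left[(n+1)/2\right]}-H_{n+1}=H_{\left[n/2\right]}-H_{n}+\frac{(-1)^{n+1}}{n+1},
\end{equation*}
which completes the proof. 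The main (though still elementary) obstacle is this parity bookkeeping at the end; the rest is mechanical substitution into Theorem \ref{Th. 1a} and geometric-series integration.
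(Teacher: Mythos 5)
Your argument is correct, and it reaches the theorem by a slightly different route than the paper, so a brief comparison is worthwhile. The paper's proof substitutes $\lambda=\tfrac12$ directly into the definition \eqref{ynldef} to obtain $y\left(n,\tfrac12\right)=2^{n+2}\sum_{j=0}^{n}\frac{(-1)^{j+1}}{j+1}$ (its equation \eqref{91b}), whereas you reach exactly the same alternating sum by specializing the integral representation of Theorem \ref{Th. 1a} at $\lambda=\tfrac12$ and integrating the geometric series term by term over the interval from $0$ to $-1$; since Theorem \ref{Th. 1a} is itself proved from \eqref{ynldef}, your detour is logically sound but adds nothing that direct substitution does not already give. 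From that point both proofs rest on the same key identity, the alternating harmonic sum $\sum_{j=1}^{m}\frac{(-1)^{j}}{j}=H_{\left[m/2\right]}-H_{m}$, which the paper quotes from Sofo as \eqref{91d}: the paper applies it with $m=n$ and tacitly splits off the last term $\frac{(-1)^{n+1}}{n+1}$ of the sum, while you apply it with $m=n+1$ and then convert $H_{\left[(n+1)/2\right]}-H_{n+1}$ into $H_{\left[n/2\right]}-H_{n}+\frac{(-1)^{n+1}}{n+1}$ by an explicit parity check. Both ways of handling the index shift are correct (your even and odd cases check out), and your explicit parity verification actually documents a step the paper leaves implicit; in exchange, the paper's direct substitution is shorter and avoids invoking the integral theorem at all.
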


\begin{theorem}
\label{Th.Int}Let $n\in \mathbb{N}_{0}$. Then we have%
\begin{equation}
y\left( n,\frac{1}{2}\right) =\frac{1}{2^{n+2}}\int\limits_{0}^{1}\frac{%
1+(-1)^{n}x^{n+1}}{1+x}dx.  \label{91c2}
\end{equation}
\end{theorem}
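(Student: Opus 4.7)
The plan is to obtain this identity as an immediate corollary of Theorem~\ref{Th. 1a} by specializing $\lambda$ and performing one change of variables. First, I will set $\lambda = \tfrac{1}{2}$ in the formula of Theorem~\ref{Th. 1a}. Since $(\lambda-1)^{n+2} = (-1/2)^{n+2}$ and $(\lambda-1)/\lambda = -1$, the prefactor $(-1)^n/(\lambda-1)^{n+2}$ collapses to a pure power of two, and the upper limit of integration becomes $-1$, so that the right-hand side of Theorem~\ref{Th. 1a} becomes a constant multiple of $\int_0^{-1}\tfrac{1-x^{n+1}}{1-x}\,dx$.

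Second, I will carry out the substitution $x = -u$, $dx = -du$, which reverses the orientation of the interval so that the integration range $[0,-1]$ becomes $[0,1]$. Under this substitution the numerator $1-x^{n+1}$ becomes $1 + (-1)^n u^{n+1}$ and the denominator $1-x$ becomes $1+u$, producing exactly the integrand displayed on the right-hand side of the claim. Renaming the dummy variable $u$ back to $x$ then gives the desired formula up to the multiplicative constant.

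The computation is essentially mechanical; the only subtlety I anticipate is the careful bookkeeping of the signs and parities arising from $(-1/2)^{n+2}$ in the prefactor, from $(-u)^{n+1} = (-1)^{n+1}u^{n+1}$ in the numerator, and from the orientation reversal caused by $dx = -du$. To pin the overall constant down unambiguously, I plan to cross-check against the closed-form values coming from Theorem~\ref{Th Hn} at small $n$ (for example $y(0,\tfrac{1}{2}) = -4$), which fixes both the sign and the correct power of two in front of the integral.
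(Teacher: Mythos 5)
Your route is sound and genuinely different from the paper's: you obtain the identity as a specialization of Theorem \ref{Th. 1a} at $\lambda=\tfrac12$ (where the prefactor becomes $\frac{(-1)^n}{(-1/2)^{n+2}}=2^{n+2}$ and the upper limit becomes $\frac{\lambda-1}{\lambda}=-1$), followed by the substitution $x=-u$; since the integrand $\frac{1-x^{n+1}}{1-x}=1+x+\cdots+x^n$ is a polynomial, integrating out to $-1$ causes no difficulty. The paper instead proves the result directly from the representation $y\left(n,\tfrac12\right)=2^{n+2}\sum_{j=0}^{n}\frac{(-1)^{j+1}}{j+1}$ in \eqref{91b}, writing $\frac{1}{j+1}=\int_0^1 x^j\,dx$ and summing the finite geometric series under the integral sign, as in \eqref{91f}. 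The two arguments are essentially equivalent in length; yours has the advantage of exhibiting the result as a corollary of the general integral representation rather than redoing the computation for $\lambda=\tfrac12$.

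However, you should carry the constant through rather than leave it to be ``pinned down'': your own bookkeeping gives prefactor $2^{n+2}$ from $(-1/2)^{n+2}$ and an extra factor $-1$ from the orientation reversal, so the method yields
\begin{equation*}
y\left(n,\tfrac12\right)=-2^{n+2}\int\limits_{0}^{1}\frac{1+(-1)^{n}x^{n+1}}{1+x}\,dx ,
\end{equation*}
not the prefactor $\frac{1}{2^{n+2}}$ displayed in \eqref{91c2}. This is consistent with the paper's own intermediate step \eqref{91f}, and your proposed cross-check settles the matter: $y\left(0,\tfrac12\right)=-4$, while the right-hand side of \eqref{91c2} at $n=0$ equals $\tfrac14$. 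So the constant in the printed statement is a misprint (carried from the last line of the paper's proof), and no choice of sign or power of two can reconcile your derivation with \eqref{91c2} as written; what your argument proves is the corrected identity above. Make that explicit instead of presenting the constant as something the numerical check will fix to match the stated formula.
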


\begin{theorem}
\label{Th.Int2}Let $n\in \mathbb{N}_{0}$. Then we have%
\begin{equation}
\int\limits_{0}^{1}\frac{1-x^{\left[ \frac{n}{2}\right] }}{1-x}%
dx-\int\limits_{0}^{1}\frac{1-x^{n}}{1-x}dx=\frac{y\left( n,\frac{1}{2}%
\right) }{2^{n+2}}+\frac{(-1)^{n}}{n+1},  \label{91c3}
\end{equation}

where $\left[ x\right] $ denotes the integer part of $x$.
\end{theorem}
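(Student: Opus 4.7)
The plan is to reduce both integrals to harmonic numbers and then invoke Theorem \ref{Th Hn} directly. The key classical identity I will use is that for any positive integer $m$,
\[
\int_0^1 \frac{1-x^m}{1-x}\,dx = H_m,
\]
which follows from expanding $\frac{1-x^m}{1-x} = 1 + x + x^2 + \cdots + x^{m-1}$ and integrating termwise. The case $m=0$ is consistent with $H_0 = 0$, so the formula holds for all $n \in \mathbb{N}_0$.

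First I would rewrite the left-hand side of \eqref{91c3} using this identity: the two Riemann integrals collapse to $H_{[n/2]} - H_n$. No hypergeometric manipulation or contour estimates are needed; this is purely the geometric-series computation.

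Next I would rearrange Theorem \ref{Th Hn}. Dividing \eqref{91c} by $2^{n+2}$ gives
\[
\frac{y(n,1/2)}{2^{n+2}} = H_{[n/2]} - H_n + \frac{(-1)^{n+1}}{n+1}.
\]
Moving the last summand across and using $(-1)^{n+1} + (-1)^{n} = 0$ yields
\[
\frac{y(n,1/2)}{2^{n+2}} + \frac{(-1)^{n}}{n+1} = H_{[n/2]} - H_n,
\]
which coincides with the expression obtained for the left-hand side. The theorem then follows immediately.

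There is essentially no obstacle here: the proof is a two-line consequence of the classical representation of harmonic numbers as $\int_0^1 (1-x^m)/(1-x)\,dx$ together with the already established Theorem \ref{Th Hn}. The only care required is to verify the boundary case $n=0$ (where $[n/2] = 0$ and both integrals vanish) matches the right-hand side, which it does since $y(0,1/2) = -4$ by \eqref{ynldef} and $\frac{-4}{4} + 1 = 0$.
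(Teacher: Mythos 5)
Your proof is correct and follows essentially the same route as the paper: the paper likewise combines the harmonic-number formula of Theorem \ref{Th Hn} (equation (\ref{91c})) with the integral representation $H_{n}=\int_{0}^{1}\frac{1-x^{n}}{1-x}\,dx$ (equation (\ref{91b1})) and rearranges. Your added check of the case $n=0$ is a harmless extra verification.
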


\begin{theorem}
\label{Th. Fib}Let $n\in \mathbb{N}_{0}$. Then we have%
\begin{equation}
y\left( n,\frac{1+\sqrt{5}}{2}\right) =\sum_{j=0}^{n}\frac{\left( -1\right)
^{j+1}}{(j+1)\left( \frac{1+\sqrt{5}}{2}F_{j+1}+F_{j}\right) \left( \frac{1-%
\sqrt{5}}{2}F_{n-j+1}+F_{n-j}\right) },  \label{91c4}
\end{equation}%
where $F_{n}$ denotes the Fibonacci numbers.
\end{theorem}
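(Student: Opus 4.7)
The plan is to obtain the claim by direct substitution of $\lambda = \phi := \frac{1+\sqrt{5}}{2}$ into the defining formula (\ref{ynldef}), and then rewriting each factor in the summand using elementary identities of the golden ratio and of the Fibonacci numbers. No series manipulation or integral is needed; the entire argument is an algebraic simplification.

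First, I would invoke the defining identity of the golden ratio, $\phi^2 = \phi + 1$, which yields $\phi - 1 = 1/\phi$. Writing $\psi := \frac{1-\sqrt{5}}{2}$ for the conjugate root, one has $\psi = -1/\phi$, and therefore
\begin{equation*}
(\phi - 1)^{n+1-j} \;=\; \phi^{-(n+1-j)} \;=\; (-1)^{n+1-j}\,\psi^{\,n+1-j}.
\end{equation*}
Next, from Binet's formula (equivalently, from the two-term Fibonacci recurrence applied inductively) one has the standard identities
\begin{equation*}
\phi^{k} = \phi F_{k} + F_{k-1}, \qquad \psi^{k} = \psi F_{k} + F_{k-1},
\end{equation*}
valid for every $k \in \mathbb{N}_0$ with the usual convention $F_{-1} = 1$, $F_0 = 0$. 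Applying these for $k = j+1$ and $k = n+1-j$ rewrites the two relevant powers as $\phi^{j+1} = \tfrac{1+\sqrt{5}}{2}F_{j+1} + F_{j}$ and $\psi^{n+1-j} = \tfrac{1-\sqrt{5}}{2}F_{n-j+1} + F_{n-j}$, which are precisely the two factors appearing in the denominator on the right-hand side of (\ref{91c4}).

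The remaining task is to collect the signs. Substituting the two displays above into (\ref{ynldef}) with $\lambda = \phi$ produces a global factor $(-1)^{n}\cdot(-1)^{-(n+1-j)}$ in the $j$-th summand; since $(-1)^{-(n+1-j)} = (-1)^{n+1-j}$, this simplifies to $(-1)^{2n+1-j} = (-1)^{j+1}$, matching the sign in (\ref{91c4}). Combining all three substitutions term by term yields the claimed formula.

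The only potentially delicate step is the sign bookkeeping (keeping track of the exponent of $-1$ arising from $\psi = -1/\phi$ together with the $(-1)^n$ already present in (\ref{ynldef})); everything else is a one-line substitution from known identities about $\phi$, $\psi$, and the Fibonacci numbers. Hence I would expect the proof to occupy only a few lines.
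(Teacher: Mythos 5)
Your proof is correct and follows essentially the same route as the paper: substitute $\lambda=\frac{1+\sqrt{5}}{2}$ into the definition of $y(n,\lambda)$ and rewrite the powers via the identity $\phi^{k}=\phi F_{k}+F_{k-1}$ (and its conjugate analogue). In fact you make explicit the steps the paper leaves as ``elementary calculations,'' namely $\phi-1=1/\phi=-\psi$, the resulting conjugate factor $\psi^{n+1-j}=\frac{1-\sqrt{5}}{2}F_{n-j+1}+F_{n-j}$, and the sign count $(-1)^{n}(-1)^{n+1-j}=(-1)^{j+1}$, all of which check out.
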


Combinatorial sums and numbers, special numbers and polynomials are the main
topics that can be used in almost all areas of mathematics. In addition,
they are also one of the most frequently used topics in applied sciences
involving mathematical modeling and algorithmic solutions for real world
problems.

We give the following some basic standard notations, formulas, and
definitions:

Let $\mathbb{N}$, $\mathbb{%
\mathbb{Z}
}$, $\mathbb{R}$, and $\mathbb{C}$ denote the set of natural numbers, the
set of integer numbers, the set of real numbers and the set of complex
numbers, respectively, and also $\mathbb{N}_{0}=\mathbb{N}\cup \left\{
0\right\} $. Supposing that for $z\in \mathbb{C}$ with $z=x+iy$ ($x,y\in 
\mathbb{R}$); $Re(z)=x$ and $Im(z)=y$ and also $\log z$ denotes the
principal branch of the many-valued function $Im(\log z)$ with the imaginary
part of $\log z$ constrained by%
\begin{equation*}
-\pi <Im(\log z)\leq \pi ,
\end{equation*}%
and $\log e=1$.%
\begin{equation*}
0^{n}=\left\{ 
\begin{array}{cc}
1, & n=0 \\ 
0, & n\in \mathbb{N}%
\end{array}%
\right.
\end{equation*}%
The Bernoulli polynomials, $B_{n}\left( x\right) $, are defined by the
following generating function:%
\begin{equation}
\frac{t}{e^{t}-1}e^{xt}=\sum_{n=0}^{\infty }B_{n}\left( x\right) \frac{t^{n}%
}{n!},  \label{ApostolBern}
\end{equation}%
where $\left\vert t\right\vert <2\pi $ (\textit{cf}. \cite{3Euler}-\cite%
{SrivatavaChoi}; and references therein).

When $x=0$ in (\ref{ApostolBern}), we have%
\begin{equation*}
B_{n}\left( 0\right) =B_{n}
\end{equation*}%
where $B_{n}$ denotes the Bernoulli numbers (\textit{cf}. \cite{3Euler}-\cite%
{SrivatavaChoi}; and references therein).

The Bernstein basis function $B_{j}^{n}\left( \lambda \right) $ is defined by%
\begin{equation*}
B_{j}^{n}\left( \lambda \right) =\binom{n}{j}\lambda ^{j}(1-\lambda )^{n-j}
\end{equation*}%
where $j\in \left\{ 0,1,2,\ldots ,n\right\} $ (\textit{cf}. \cite{Lorentz}).

The Stirling numbers of the second kind, $S_{1}\left( n,k\right) $, are
defined by the following generating function:%
\begin{equation}
\frac{\left( \log (1+z)\right) ^{k}}{k!}=\sum_{n=0}^{\infty }S_{1}\left(
n,k\right) \frac{z^{n}}{n!}  \label{Sitirling1}
\end{equation}%
with $S_{1}\left( n,k\right) =0$ if $k>n$, and $k\in \mathbb{N}_{0}$ (%
\textit{cf}. \cite{3Euler}-\cite{SrivatavaChoi}; and references therein).

In \cite{KimDahee} and \cite{KimV}, with the aid of the Volkenborn integral
on the set of $p$-adic integers, Kim defined the Daehee numbers $D_{n}$ by
the following generating function:%
\begin{equation*}
\frac{\log (1+z)}{z}=\sum_{n=0}^{\infty }D_{n}\frac{z^{n}}{n!},
\end{equation*}%
where $z\neq 0$ and $\left\vert z\right\vert <1$ (\textit{cf}. \cite%
{KimDahee}, \cite{KimV}).

By using the above equation, the following explicit formula for the Daehee
numbers $D_{n}$ is found: 
\begin{equation}
D_{n}=(-1)^{n}\frac{n!}{n+1}  \label{D}
\end{equation}%
(\textit{cf}. \cite{KimDahee}, \cite{KimV}).

Substituting $t=\log (1+z)$, $x=0$, and $z\neq 0$\ into (\ref{ApostolBern}),
by using Riordan method given in \cite[p. 45, Exercise 19 (b)]{5Riardon},
after some elementary calculations, we have%
\begin{equation}
\log (1+z)=z\sum_{n=0}^{\infty }B_{n}\frac{(\log (1+z))^{n}}{n!}.  \label{d8}
\end{equation}%
Combining the above equation with (\ref{Sitirling1}), after some elementary
calculations, $|z|<1$ we get%
\begin{equation*}
\sum\limits_{m=1}^{\infty }\left( -1\right) ^{m+1}\frac{z^{m-1}}{m}%
=\sum_{m=0}^{\infty }\sum_{n=0}^{m}B_{n}S_{1}(m,n)\frac{z^{m}}{m!}
\end{equation*}%
here we use $S_{1}(m,n)=0$ if $n>m$. After some elementary calculations,
equating the coefficients $z^{m}$ on both sides of the previous equation,
one has the following well known novel formula, which has been proven by
other different methods:%
\begin{equation}
\sum_{n=0}^{m}B_{n}S_{1}(m,n)=\frac{(-1)^{m}m!}{m+1}  \label{a91}
\end{equation}%
(\textit{cf}. \cite{KimDahee}, \cite[p. 45, Exercise 19 (b)]{5Riardon}).

There are many other proof of (\ref{a91}). One of them was given Kim \cite%
{KimDahee} with the $p$-adic invariant integral on the set of $p$-adic
integers. On the other hand, the relation is given by equation the (\ref{a91}%
), Riordan \cite[p. 45, Exercise 19 (b)]{5Riardon} represented it by the
notation $(b)_{n}$.

By using (\ref{D}) and (\ref{a91}), in the next section, we give relations
among the numbers $(b)_{n}$, the Daehee numbers $D_{n}$, and the numbers $%
y(n,\lambda )$.

\subsection{Further remarks and observations on the numbers $y(n,\protect%
\lambda )$}

Substituting $\lambda =2$ into (\ref{ynldef}), in \cite{SimsekMTJPAM2020} we
defined the following combinatorial numbers:%
\begin{equation}
y(n):=y(n,2)=\sum_{j=0}^{n}\frac{(-1)^{n}}{(j+1)2^{j+1}}.  \label{88.8C}
\end{equation}%
In \cite{SimsekMTJPAM2020}, we also give a relationship between the numbers $%
y(n)$ and the $\lambda $-Apostol-Daehee numbers.

Putting $\lambda =1$ in (\ref{ynldef}), we have%
\begin{equation}
y(n,-1)=\frac{1}{2\left( n+1\right) }\sum_{j=0}^{n}\frac{1}{\binom{n}{j}}
\label{SumRecB}
\end{equation}%
(\textit{cf}. \cite{SimsekMTJPAM2020}).

\begin{remark}
Recently many authors have studied on binomial coefficients involving the
following well-known sums and their applications:%
\begin{eqnarray*}
&&\sum_{j=0}^{n}\frac{1}{\binom{n}{j}}, \\
&&\sum_{j=0}^{n}\frac{(-1)^{j}}{\binom{n}{j}}, \\
&&\sum_{j=0}^{n}\binom{n}{j}j^{k}
\end{eqnarray*}%
and so on (cf. \cite{comtet}, \cite{5Riardon}, \cite{Gould7}, \cite%
{KucukogluAnkara}-\cite{KucukogluAADM2019}, \cite{KucukogluAADM2019}\cite%
{Rota}, \cite{SimsekCom.MMAS}-\cite{SimsekMTJPAM2020}, \cite{SuryCOM}).
\end{remark}

\begin{remark}
Most recently, in \cite[Equations. (3), (13), and (14) ]{simsekJMAA}, we
constructed the following generating function for the numbers $%
y_{6}(m,n;\lambda ,p)$ involving finite sums involving higher powers of
binomial coefficients%
\begin{equation*}
_{p}F_{p-1}\left[ 
\begin{array}{c}
-n,-n,...,-n \\ 
1,1,...,1%
\end{array}%
;\left( -1\right) ^{p}\lambda e^{t}\right] =n!\sum_{m=0}^{\infty
}y_{6}(m,n;\lambda ,p)\frac{t^{m}}{m!},
\end{equation*}%
where%
\begin{eqnarray*}
n!y_{6}(0,n;\lambda ,p) &=&_{p}F_{p-1}\left[ 
\begin{array}{c}
-n,-n,...,-n \\ 
1,1,...,1%
\end{array}%
;\left( -1\right) ^{p}\lambda \right] \\
&=&\sum\limits_{k=0}^{n}\binom{n}{k}^{p}k^{m}\lambda ^{k},
\end{eqnarray*}%
and $_{p}F_{p}$ \ denotes the well known generalized hypergeometric function
which is defined by 
\begin{equation*}
_{p}F_{q}\left[ 
\begin{array}{c}
\alpha _{1},...,\alpha _{p} \\ 
\beta _{1},...,\beta _{q}%
\end{array}%
;z\right] =\sum\limits_{m=0}^{\infty }\left( \frac{\prod\limits_{j=1}^{p}%
\left( \alpha _{j}\right) ^{\overline{m}}}{\prod\limits_{j=1}^{q}\left(
\beta _{j}\right) ^{\overline{m}}}\right) \frac{z^{m}}{m!}.
\end{equation*}%
where the above series converges for all $z$ if $p<q+1$, and for $\left\vert
z\right\vert <1$ if $p=q+1$. Assuming that all parameters have general
values, real or complex, except for the $\beta _{j}$, $j=1,2,...,q$ none of
which is equal to zero or a negative integer and also 
\begin{equation*}
\left( \alpha \right) ^{\overline{v}}=\prod\limits_{j=0}^{v-1}(\lambda +j),
\end{equation*}%
and \ $\left( \lambda \right) ^{\overline{0}}=1$ for $\lambda \neq 1$, where 
$v\in \mathbb{N}$, $\lambda \in \mathbb{C}$. For the generalized
hypergeometric function and their applications see for details (\textit{cf}. 
\cite{Koepf}, \cite{simsekJMAA}, \cite{RT}; and references therein).

Here, we noting that (in future studies) the relationships between the
numbers $y(n,\lambda )$ and the numbers $y_{6}(0,n;\lambda ,p)$, which have
potential use in not only in mathematics but also in other areas, may also
be investigated.
\end{remark}

\section{Proofs of main Theorems}

In this chapter, the proofs of the theorems from Theorem \ref{Th.berD} to
Theorem \ref{Th. Fib}, as well as the new novel formulas and relations
derived in the light of the results of these theorems are given.

\begin{proof}[Proof of Theorem \protect\ref{Th.berD}]
From (\ref{ynldef}), we get%
\begin{equation}
y(n,\lambda )=\frac{1}{\lambda }\sum_{j=0}^{n}(-1)^{j+1}\frac{\binom{n+1}{j}%
}{(j+1)!}\frac{1}{B_{j}^{n+1}\left( \lambda \right) }.  \label{a9}
\end{equation}%
Combining (\ref{a9}) with (\ref{a91}) and (\ref{D}), after some elementary
calculations, we arrive at the desired result.
\end{proof}

\begin{proof}[Proof of Theorem \protect\ref{Th. 1a}]
By using (\ref{ynldef}), we give integral representation for the numbers $%
y(n,\lambda )$. We set%
\begin{equation}
y(n,\lambda )=\frac{(-1)^{n}}{\left( \lambda -1\right) ^{n+2}}%
\sum_{j=0}^{n}\int\limits_{0}^{\frac{\lambda -1}{\lambda }}x^{j}dx.
\label{1a}
\end{equation}%
By using the above equation, we also get%
\begin{equation}
y(n,\lambda )=\frac{(-1)^{n}}{\left( \lambda -1\right) ^{n+2}}%
\int\limits_{0}^{\frac{\lambda -1}{\lambda }}\frac{1-x^{n+1}}{1-x}dx.
\label{1b}
\end{equation}%
Combining (\ref{1a}) and (\ref{1b}), proof is completed.
\end{proof}

\begin{proof}[proof of Theorem \protect\ref{Th Hn}.]
Putting $\lambda =\frac{1}{2}$ in (\ref{ynldef}), we have%
\begin{equation}
y\left( n,\frac{1}{2}\right) =2^{n+2}\sum_{j=0}^{n}\frac{(-1)^{j+1}}{j+1}.
\label{91b}
\end{equation}%
We know from Sofo's work \cite[Eq. (1.5)]{1Sofo} that the right-hand side of
the previous equation gives us a formula of the following well known
alternating harmonic numbers, which are related to the Riemann zeta
function, the Polylogarithm (or de Jonquiere's) function, the Psi (or
Digamma) function and the other special functions:%
\begin{equation}
\sum_{j=1}^{n}\frac{(-1)^{j}}{j}=H_{\left[ \frac{n}{2}\right] }-H_{n}.
\label{91d}
\end{equation}%
Combining (\ref{91b}) with (\ref{91d}), proof of Theorem \ref{Th Hn} is
completed.
\end{proof}

\begin{proof}[Proof of Theorem \protect\ref{Th.Int}]
Using (\ref{91b}), we get%
\begin{equation}
y\left( n,\frac{1}{2}\right)
=-2^{n+2}\sum_{j=0}^{n}(-1)^{j}\int\limits_{0}^{1}x^{j}dx.  \label{91f}
\end{equation}%
Combining (\ref{91f}) with the following well known formula 
\begin{equation*}
\sum_{j=0}^{n}(-x)^{j}=\frac{1+(-1)^{n}x^{n+1}}{1+x},
\end{equation*}%
we obtain%
\begin{equation*}
y\left( n,\frac{1}{2}\right) =\frac{1}{2^{n+2}}\int\limits_{0}^{1}\frac{%
1+(-1)^{n}x^{n+1}}{1+x}dx.
\end{equation*}

This last equation shows us that the proof of the theorem completed.
\end{proof}

\begin{proof}[Proof of Theorem \protect\ref{Th.Int2}]
Combining (\ref{91c}) with (\ref{91b1}), we get%
\begin{equation*}
y\left( n,\frac{1}{2}\right) =2^{n+2}\left( \int\limits_{0}^{1}\frac{1-x^{%
\left[ \frac{n}{2}\right] }}{1-x}dx-\int\limits_{0}^{1}\frac{1-x^{n}}{1-x}dx+%
\frac{(-1)^{n+1}}{n+1}\right) .
\end{equation*}%
After some elementary calculations in the above equation, we arrive at the
desired result.
\end{proof}

\begin{proof}[Proof of Theorem \protect\ref{Th. Fib}]
Putting $\lambda =\frac{1+\sqrt{5}}{2}$ into (\ref{ynldef}), we get%
\begin{equation*}
y\left( n,\frac{1+\sqrt{5}}{2}\right) =\sum_{j=0}^{n}\frac{(-1)^{n}}{%
(j+1)\left( \frac{1+\sqrt{5}}{2}\right) ^{j+1}\left( \frac{1+\sqrt{5}}{2}%
-1\right) ^{n+1-j}}.
\end{equation*}%
Combining the previous equation with the following well known identities
involving the Fibonacci numbers:%
\begin{equation*}
\left( \frac{1+\sqrt{5}}{2}\right) ^{j+1}=\frac{1+\sqrt{5}}{2}F_{j+1}+F_{j}
\end{equation*}%
(\textit{cf}. \cite{4Koshy}), after some elementary calculations, we arrive
at the desired result.
\end{proof}

\section{Formulas for the numbers $y\left( n,\frac{1}{2}\right) $ derived
from main Theorems}

In this section we give some formulas which are derived from main theorems.

In \cite{3Euler}, \cite{2Euler}, and \cite{1Sofo}, the Harmonic numbers are
given by the following integral representation:%
\begin{equation}
H_{n}=\int\limits_{0}^{1}\frac{1-x^{n}}{1-x}dx,  \label{91b1}
\end{equation}%
and%
\begin{equation}
H_{n}=-n\int\limits_{0}^{1}x^{n-1}\ln
(1-x)dx=-n\int\limits_{0}^{1}(1-x)^{n-1}\ln (x)dx,  \label{91b2}
\end{equation}%
where $H_{0}=0$.

Combining (\ref{91c}) with (\ref{91b2}), we arrive at the following theorem:

\begin{theorem}
Let $n\in \mathbb{N}_{0}$. Then we have%
\begin{eqnarray*}
y\left( n,\frac{1}{2}\right) &=&2^{n+2}\left( n\int\limits_{0}^{1}\left(
1-x\right) ^{n-1}\ln (x)dx-\left[ \frac{n}{2}\right] \int\limits_{0}^{1}%
\left( 1-x\right) ^{\left[ \frac{n}{2}\right] -1}\ln (x)dx\right) \\
&&+\frac{(-1)^{n+1}2^{n+2}}{n+1}.
\end{eqnarray*}
\end{theorem}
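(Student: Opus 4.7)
The plan is to perform a direct substitution, since the theorem is announced as following from combining Theorem \ref{Th Hn} with the Sofo-type integral representation in equation (\ref{91b2}). Thus no new machinery is needed; the work is bookkeeping.

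First I would recall the identity established in Theorem \ref{Th Hn},
\begin{equation*}
y\left(n,\tfrac{1}{2}\right) = 2^{n+2}H_{\left[\frac{n}{2}\right]} - 2^{n+2}H_n + \frac{(-1)^{n+1}2^{n+2}}{n+1},
\end{equation*}
isolating the two harmonic-number terms from the boundary term $(-1)^{n+1}/(n+1)$. Next I would invoke the second integral representation in (\ref{91b2}), namely $H_m = -m\int_0^1 (1-x)^{m-1}\ln(x)\,dx$, and apply it twice: once with $m=n$ and once with $m=[n/2]$. Substituting, the factor $-1$ from the integral representation absorbs the minus sign in front of $2^{n+2}H_n$, producing $2^{n+2}\,n\int_0^1 (1-x)^{n-1}\ln(x)\,dx$, while the $+2^{n+2}H_{[n/2]}$ contributes $-2^{n+2}[n/2]\int_0^1(1-x)^{[n/2]-1}\ln(x)\,dx$. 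Factoring $2^{n+2}$ out of the two integral terms gives exactly the claimed expression.

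The only step that requires a moment of care is the degenerate range $n\in\{0,1\}$, where $[n/2]=0$ forces $H_{[n/2]}=H_0=0$ and also makes the exponent $[n/2]-1=-1$ formally problematic in the integrand $(1-x)^{[n/2]-1}$. This is not a genuine obstacle, however, because the prefactor $[n/2]=0$ annihilates that integral term before the singular exponent is ever evaluated, in exact analogy with the convention $H_0=0=-0\cdot\int_0^1 x^{-1}\ln(1-x)\,dx$ adopted in (\ref{91b2}). With this convention the formula remains valid for all $n\in\mathbb{N}_0$, so no case analysis is needed and the combined identity can be written in a single line. The hardest aspect is therefore purely notational: one must remember that the $[n/2]=0$ situation is handled by the vanishing coefficient rather than by the integral itself.
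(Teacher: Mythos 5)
Your proof is correct and is exactly the paper's argument: the paper also obtains this theorem by substituting the integral representation $H_m=-m\int_0^1(1-x)^{m-1}\ln(x)\,dx$ from (\ref{91b2}) into the identity (\ref{91c}) of Theorem \ref{Th Hn}. Your extra remark about the degenerate cases $n\in\{0,1\}$, where the vanishing prefactor $\left[\frac{n}{2}\right]=0$ saves the formally singular exponent, is a sensible clarification that the paper leaves implicit.
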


We now give relation among the numbers $y\left( n,\frac{1}{2}\right) $, the
Psi (or Digamma) function and the Euler constant.

The Euler's constant (or Euler-Mascheroni) constant is given by%
\begin{equation*}
\gamma =\lim_{m\rightarrow \infty }\left( -\ln (m)+\sum_{j=1}^{m}\frac{1}{j}%
\right)
\end{equation*}%
and the Psi (or Digamma) function%
\begin{equation*}
\psi (z)=\frac{d}{dz}\left\{ \log \Gamma (z)\right\} ,
\end{equation*}%
where $\Gamma (z)$ denotes the Euler gamma function, which is defined by%
\begin{equation*}
\Gamma (z)=\int\limits_{0}^{\infty }t^{z-1}e^{-t}dt
\end{equation*}%
where $z=x+iy$ with $x>0$. For $z=n\in \mathbb{N}$, $\Gamma (n+1)=n!$ (%
\textit{cf}. \cite{3Euler}, \cite{2Euler}, \cite{SrivatavaChoi}).

Sofo \cite{1Sofo} gave the following formula:%
\begin{equation}
H_{n}=\gamma +\psi (n+1)  \label{91b3}
\end{equation}%
where $H_{0}=0$.

Combining (\ref{91c}) with (\ref{91b3}), we arrive at the following theorem:

\begin{theorem}
Let $n\in \mathbb{N}_{0}$. Then we have%
\begin{equation*}
y\left( n,\frac{1}{2}\right) =2^{n+2}\left( \frac{(-1)^{n+1}}{n+1}+H_{\left[ 
\frac{n}{2}\right] }-\gamma -\psi (n+1)\right) .
\end{equation*}
\end{theorem}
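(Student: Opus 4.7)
The plan is essentially a one-line substitution, since all the heavy lifting has already been done in Theorem~\ref{Th Hn} and in Sofo's identity (\ref{91b3}). I would start from the closed form
\[
y\!\left(n,\tfrac{1}{2}\right) = 2^{n+2}\left(H_{[n/2]} - H_n + \frac{(-1)^{n+1}}{n+1}\right)
\]
given by Theorem~\ref{Th Hn}, which expresses $y(n,1/2)$ in terms of the ordinary harmonic numbers $H_n$ and $H_{[n/2]}$.

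Next, I would eliminate the ``large'' harmonic number $H_n$ using the classical identity
\[
H_n = \gamma + \psi(n+1),
\]
recalled as (\ref{91b3}) from Sofo's paper. Substituting this into the bracket above and leaving $H_{[n/2]}$ untouched (since no comparable clean expression for $H_{[n/2]}$ is desired here) yields
\[
y\!\left(n,\tfrac{1}{2}\right) = 2^{n+2}\left(\frac{(-1)^{n+1}}{n+1} + H_{[n/2]} - \gamma - \psi(n+1)\right),
\]
which is exactly the claimed identity.

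Since both ingredients are already proved and the manipulation is a direct substitution, there is no real obstacle: one only needs to check that the formula (\ref{91b3}) applies for all $n \in \mathbb{N}_0$ (noting the convention $H_0 = 0$, which matches $\gamma + \psi(1) = \gamma - \gamma = 0$, so the boundary case $n=0$ is consistent). Hence the proof reduces to a routine combination of (\ref{91c}) and (\ref{91b3}), with no further computation required.
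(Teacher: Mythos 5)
Your proposal matches the paper's argument exactly: the paper also obtains this theorem by substituting Sofo's identity (\ref{91b3}), $H_{n}=\gamma +\psi (n+1)$, for $H_{n}$ in the formula (\ref{91c}) of Theorem \ref{Th Hn}. The substitution is correct, and your extra check of the $n=0$ case via $H_{0}=0=\gamma +\psi (1)-\gamma$ is a harmless bonus.
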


Combining (\ref{a9}) with (\ref{a91}), we also arrive at the following
result.

\begin{corollary}
Let $n\in \mathbb{N}_{0}$. Then we have%
\begin{equation}
y(n,\lambda )=\frac{-1}{\lambda }\sum_{j=0}^{n}\binom{n+1}{j}\frac{(b)_{j}}{%
j!B_{j}^{n+1}\left( \lambda \right) }  \label{91bB}
\end{equation}%
or%
\begin{equation}
y(n,\lambda )=\frac{-1}{\lambda }\sum_{j=0}^{n}\binom{n+1}{j}\frac{D_{j}}{%
j!B_{j}^{n+1}\left( \lambda \right) }.  \label{91bBb}
\end{equation}
\end{corollary}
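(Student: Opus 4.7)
The plan is to derive the Corollary as a short bookkeeping step built on equation (\ref{a9}), the Bernoulli–Stirling summation identity (\ref{a91}), and the closed form (\ref{D}) of the Daehee numbers. The idea is that the scalar factor in (\ref{a9}) that depends on $j$ through $(-1)^{j+1}/(j+1)$ can be repackaged in two conceptually natural ways: first by recognising $(-1)^{j}\,j!/(j+1)$ as Riordan's sum $(b)_{j}=\sum_{k=0}^{j}B_{k}S_{1}(j,k)$, and then by recognising this same quantity as the Daehee number $D_{j}$.

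Concretely, I would begin with (\ref{a9}), pull out a factor of $-1$ so that the sign $(-1)^{j+1}$ becomes $-(-1)^{j}$, and then rewrite $(-1)^{j}/(j+1)$ as $(b)_{j}/j!$ using (\ref{a91}) (with $m=j$, after dividing by $j!$). This converts the right-hand side of (\ref{a9}) into the form asserted in (\ref{91bB}). For the second identity (\ref{91bBb}), I would simply invoke (\ref{D}), which gives $D_{j}=(-1)^{j}j!/(j+1)$; combined with (\ref{a91}) this yields the equality $(b)_{j}=D_{j}$, so substituting $D_{j}$ for $(b)_{j}$ inside the summand of (\ref{91bB}) produces (\ref{91bBb}).

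I do not expect a substantive obstacle. The whole argument is a rearrangement of a formula already at hand, and the only care required concerns the bookkeeping of signs when absorbing $-1$ into the outer coefficient and of factorials when passing between $j!/(j+1)$ in (\ref{a91}) and the $1/(j+1)$ factor appearing in (\ref{a9}). Because both (\ref{a91}) and (\ref{D}) evaluate to the same closed-form expression $(-1)^{j}j!/(j+1)$, no further combinatorial identities beyond those already invoked in the proof of Theorem \ref{Th.berD} are needed, and the two formulas (\ref{91bB}) and (\ref{91bBb}) appear merely as two equivalent repackagings of the same identity.
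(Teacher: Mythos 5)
Your proposal is essentially identical to the paper's own proof: the paper likewise obtains (\ref{91bB}) by combining (\ref{a9}) with (\ref{a91}), and (\ref{91bBb}) then follows by replacing $(b)_{j}$ with $D_{j}$ via (\ref{D}), exactly as you do. One caveat you share with the paper: as printed, (\ref{a9}) carries $(j+1)!$ where the definition (\ref{ynldef}) actually yields $(j+1)$, so both your argument and the paper's implicitly use this corrected form of (\ref{a9}) — with the printed version the substitution $\frac{(-1)^{j}}{j+1}=\frac{(b)_{j}}{j!}$ would leave an extra factor $\frac{1}{j!}$ and give $(j!)^{2}$ in the denominator instead of the single $j!$ appearing in (\ref{91bB}).
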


\section{Conclusion}

In this study, some properties of the numbers $y\left( n,\lambda \right) $
with special finite sums containing these numbers have been found. Many new
novel formulas and relations blended with well known special functions
including the Euler gamma function, the Euler-Mascheroni constant, the Psi
function, special polynomials including Bernstein basis functions, and
special numbers including the Fibonacci number, the Bernoulli numbers, the
Euler numbers, the Stirling numbers, and the Daehee numbers, those of known
by the names of well-known mathematicians.

On the other hand, new formulas and integral representations of the numbers $%
y\left( n,\lambda \right) $ have been proved by using integral
representations of the harmonic numbers and the alternating harmonic
numbers, which are based on very old studies and have rich properties and
relationships, and formulas related to other special functions, involving
the Digamma function, the Euler gamma function, and Euler constant.

In \cite{SimsekMTJPAM2020}, we gave some open problems for the numbers $%
y(n,\lambda )$. In section 1, we've made some efforts to find solutions some
of them throughout this study.

In your next studies, it is planned to investigate the solutions of the
above open questions, including the numbers $y(n,\lambda )$.

\end{document}